\newcommand{\vol}[1]{\mathrm{vol}\left(#1\right)}
\newtheorem{thm}{Theorem}[section]
\newtheorem{cor}[thm]{Corollary}
\newtheorem{lemma}[thm]{Lemma}
\newtheorem{propo}[thm]{Proposition}
\theoremstyle{remark}
\newtheorem*{rmk}{Remark}
\begin{document}

%%%%%%%%%%%%%%%%%%%%%%%%%%%%%%%%%%%%%%%%%%%%%5

\title[Isoperimetric local Bollob\'as-Thomason inequalities]{On isoperimetric local-Bollob\'as-Thomason inequalities}

\author[L. J. Al\'ias]{Luis J. Al\'ias}
\email{ljalias@um.es}
\address{Department of Mathematics. Universidad de Murcia, Spain}

\author[B. Gonz\'alez Merino]{Bernardo Gonz\'alez Merino}
\email{bgmerino@um.es}
\address{Department of Engineering and Technology of Computers, area of Applied Mathematics.
%Facultad de Inform\'atica, 
Universidad de Murcia, 
%30100-Murcia, 
Spain}

\author[B. Mar\'in Gimeno]{Beatriz Mar\'in Gimeno}
\email{b.maringimeno@um.es}
\address{Department of Mathematics. University of Murcia, Spain}

\subjclass[2020]{Primary 52A20, Secondary 52A38, 52A40, 52A23}
\keywords{$s$-cover, convex body, projections, volume, irreducible, isoperimetry}

\thanks{The first and third authors are partially supported by PID2021-124157NB-I00 funded by MCIN/AEI
/10.13039/501100011033/ ‘ERDF A way of making Europe’, Spain. The second and third authors are partially supported by Ministerio de Ciencia, Innovación y Universidades project PID2022-136320NB-I00/AEI/10.13039/501100011033/FEDER, UE. 
%MICINN Project PID2022-136320NB-I00 Spain. 
The third author is also supported by Contratos
Predoctorales FPU-Universidad de Murcia 2024, Spain}

\date{\today}

\begin{abstract}
We prove the following isoperimetric-type inequality: for every convex body $K$ in $\mathbb R^n$ and some $\sigma\subset[n]:=\{1,\dots,n\}$ there exists a suitable Hanner polytope $B_K$ with the same volume as $K$ and such that the volume of each of its orthogonal projections onto every subspace whose basis is formed by the canonical vectors $\{e_i:i\in\tau\cup([n]\setminus\sigma)\}$, for every $\tau\subseteq\sigma$, bounds from below the volume of the corresponding projections of $K$.
\end{abstract}

\maketitle

\section{Introduction}

The classical isoperimetric inequality \cite{O78} states that for any $n$-dimensional convex body $K$ (i.e. a convex compact set of $\mathbb{R}^n$ with non-empty interior) there exists an Euclidean ball $B$ such that 
\begin{equation}\label{eq:  classical isoperimetric inequality}
    \mathrm{vol}_n(K)=\mathrm{vol}_n(B)\quad\text{with}\quad S(K) \geq S(B). 
\end{equation}
Above, $\mathrm{vol}_n(\cdot)$ denotes the $n$-dimensional volume or Lebesgue measure, $S(\cdot)$ denotes the surface area 
%of a $n$-dimensional convex body 
and $B$ is a rescaling $rB^n_2$ of the Euclidean unit ball $B_2^n:=\{x\in\mathbb{R}^n:\Vert x\Vert_2\leq1\}$ and where $r^n:=\vol K/\vol {B^n_2}$. We will denote by $\mathcal{K}^n$ the set of all $n$-dimensional convex bodies in $\mathbb{R}^n$. From now on, if we write $\vol{\cdot}$ instead of $\mathrm{vol}_n(\cdot)$ we understand that the volume is computed with respect to the dimension of the evaluated set.

Inequality \eqref{eq:  classical isoperimetric inequality} motivated several results in modern Convex Geometric Analysis. For instance, Petty projection inequality \cite{P71} relates the volumes of $K$ and its polar projection body, and it gives an affinely invariant strengthening of the isoperimetric inequality. Other isoperimetric-type inequalities can be found, for instance, in \cite{A79,B10,H09,1HLPRY25,2HLPRY25,HL25,L86,LYZ00,MY78,  PP12, X07, Z91}. 

Bollob\'as and Thomason \cite{BT95} proved a strong isoperimetric result by comparing projections of a convex body $K$ to those of a suitable box depending on $K$. They proved that for any $K\in\mathcal K^n$ there exists a coordinate box $C_K$ such that
\begin{equation}\label{eq:BolloThoma}
    \vol{K}=\vol{C_K}\quad\text{and}\quad \vol{P_{H_{\tau}}K}\geq\vol{P_{H_\tau}C_K}
\end{equation}
for any $\tau\subseteq[n]:=\{1,\ldots,n\}$. Above, $P_{H_{\tau}}K$ denotes the orthogonal projection of $K$ onto the subspace $H_\tau:=\langle \{e_i:i\in\tau\}\rangle$, where $\{e_i\}_{i=1}^n$ is the canonical basis of $\mathbb{R}^n$. Liakopoulos \cite{L19} proved the dual counterpart to this result for sections and for a suitable $n$-dimensional crosspolytope as extremizer. 

Our main result is an isoperimetric-type result that compares projections of a convex body $K$ and a suitable Hanner polytope $B_K$ onto every coordinate subspace containing a fixed subspace.
\begin{thm}\label{thm: isoperimetric main projections}
     Let $K\in\mathcal{K}^n$ and $\sigma\subset [n]$. Then, there exist some $c_i>0$, $i\in\sigma$, and a convex body $B_K$ of the form
    $$B_K=\mathrm{conv}\left(\left\{\sum_{i\in\sigma}c_i[-e_i,e_i],L
    % \sum_{i\notin\sigma}\frac{\textcolor{red}{b_i}}{2}[-e_i,e_i]
    \right\}\right)
    $$
    % \textcolor{red}{Dejar claro que $L$ está en subespacio ortogonal a $H_\sigma.$}
    such that
    \begin{itemize}
        \item[i)] $\vol{K}=\vol{B_K}$,
        \item[ii)] $\vol{P_{H_\sigma^\bot\oplus H_\tau }K}\geq\vol{P_{H_\sigma^\bot\oplus H_\tau }B_K}$ for every $\tau\subseteq \sigma$ and
        \item[iii)] $\vol{L}=\vol{P_{H_\sigma^\bot}K}$,
    \end{itemize}
    where $L$ is any $(n-|\sigma|)$-dimensional convex body contained in $H_\sigma^\bot.$
\end{thm}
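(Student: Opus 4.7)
The plan is to parameterize $B_K$ by the positive scalars $c_i$, $i\in\sigma$, reduce the three conditions to a polymatroid feasibility problem, and then solve it via the log-submodularity of coordinate projections of convex bodies. Write $m=n-|\sigma|$ and fix $L\subset H_\sigma^\bot$ with $\vol{L}=\vol{P_{H_\sigma^\bot}K}$, securing (iii). Since the cross-polytope $\sum_{i\in\sigma}c_i[-e_i,e_i]$ and $L$ lie in complementary coordinate subspaces, slicing the convex hull in the $H_\sigma^\bot$ direction gives cross-sections of the form $(1-\|x\|_{C_\tau})L$ for $x\in C_\tau:=\sum_{i\in\tau}c_i[-e_i,e_i]$, and the polar-radial integration formula yields
\[
\vol{P_{H_\sigma^\bot\oplus H_\tau}B_K}=\vol{L}\cdot\frac{2^{|\tau|}\,m!}{(|\tau|+m)!}\prod_{i\in\tau}c_i.
\]
Setting $f(\tau):=\frac{2^{|\tau|}\,m!}{(|\tau|+m)!}\cdot\vol{P_{H_\sigma^\bot\oplus H_\tau}K}/\vol{L}$, so that $f(\emptyset)=1$, conditions (i) and (ii) together become the problem of finding $c_i>0$ with $\prod_{i\in\tau}c_i\le f(\tau)$ for every $\tau\subsetneq\sigma$ and equality at $\tau=\sigma$. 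In logarithms ($\ell_i:=\log c_i$), this amounts to finding a point in the base polyhedron of the set function $\log f$ on $2^\sigma$.

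The crux of the proof is that $\log f$ is submodular. The combinatorial weight $\log(2^{|\tau|}m!/(|\tau|+m)!)$ is submodular because $t\mapsto\log(t+m)!$ is convex, which via majorization gives $(|A|+m)!(|B|+m)!\le(|A\cup B|+m)!(|A\cap B|+m)!$ for all $A,B\subseteq\sigma$. The projection volumes $\log\vol{P_{H_\sigma^\bot\oplus H_\tau}K}$ are also submodular in $\tau$, which I would derive from the Bollob\'as--Thomason uniform cover inequality \cite{BT95}: with $\tilde S:=S\cup([n]\setminus\sigma)$, applying it to the $2$-uniform cover $\{\tilde A,\tilde B,A\setminus B,B\setminus A\}$ of $\tilde A\cup\tilde B$ and to the $1$-uniform cover $\{\widetilde{A\cap B},A\setminus B,B\setminus A\}$ of the same set, and dividing the two resulting inequalities, yields
\[
\vol{P_{H_\sigma^\bot\oplus H_{A\cup B}}K}\cdot\vol{P_{H_\sigma^\bot\oplus H_{A\cap B}}K}\le\vol{P_{H_\sigma^\bot\oplus H_A}K}\cdot\vol{P_{H_\sigma^\bot\oplus H_B}K}.
\]
Since a sum of submodular set functions is submodular, $\log f$ is submodular on $2^\sigma$.

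With $\log f$ submodular and $\log f(\emptyset)=0$, Edmonds' theorem guarantees that the base polyhedron is nonempty: for any ordering $\sigma=\{i_1,\ldots,i_{|\sigma|}\}$, the greedy values
\[
\ell_{i_j}:=\log f(\{i_1,\ldots,i_j\})-\log f(\{i_1,\ldots,i_{j-1}\})
\]
satisfy $\sum_{i\in\tau}\ell_i\le\log f(\tau)$ for every $\tau\subseteq\sigma$ (by a standard telescoping argument using submodularity) with equality at $\tau=\sigma$, and $c_i:=e^{\ell_i}>0$ produces the required body $B_K$. The main obstacle in this plan is the log-submodularity of projection volumes, which is not a priori transparent and indeed rests essentially on the Bollob\'as--Thomason uniform cover inequality; the remaining steps (the polar-radial volume computation, convexity of $\log(t+m)!$, and Edmonds' greedy algorithm) are routine.
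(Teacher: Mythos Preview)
Your reduction to a base-polytope feasibility problem is correct and elegant, but the key step---submodularity of $\log f$---fails. First, two minor slips: the volume formula should read $\vol{P_{H_\sigma^\bot\oplus H_\tau}B_K}=\vol{L}\cdot\frac{2^{|\tau|}\,|\tau|!\,m!}{(|\tau|+m)!}\prod_{i\in\tau}c_i$ (a factor $|\tau|!$ is missing), and accordingly the correct target function is $f(\tau)=\frac{1}{2^{|\tau|}}\binom{|\tau|+m}{m}\vol{P_{H_\sigma^\bot\oplus H_\tau}K}/\vol{L}$. More seriously, your derivation of log-submodularity of projection volumes by ``dividing the two resulting inequalities'' is invalid: both Bollob\'as--Thomason inequalities point the same way, and from $a\le b$ and $c\le d$ one cannot infer $a/c\le b/d$.

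In fact the conclusion itself is false. Take $n=4$, $\sigma=\{1,2,3\}$, $m=1$, $A=\{1,2\}$, $B=\{2,3\}$, and let
\[
K=\bigl\{x\in\mathbb R^4:\max(|x_1|,|x_3|)+\max(|x_2|,|x_4|)\le 1\bigr\}=\mathrm{conv}\bigl([-1,1]^2_{e_1,e_3},[-1,1]^2_{e_2,e_4}\bigr).
\]
Then $\vol{K}=8/3$, $\vol{P_{H_{\{2,4\}}}K}=4$, $\vol{P_{H_{\{1,2,4\}}}K}=\vol{P_{H_{\{2,3,4\}}}K}=8/3$, so
\[
\binom{4}{1}\binom{2}{1}\vol{K}\vol{P_{H_{\{2,4\}}}K}=\tfrac{256}{3}>64=\binom{3}{1}^2\vol{P_{H_{\{1,2,4\}}}K}\vol{P_{H_{\{2,3,4\}}}K},
\]
violating the submodularity you need (and the bare projection log-volumes are not submodular either: already the three-dimensional Hanner body $\{\max(|x_1|,|x_2|)+|x_3|\le 1\}$ gives $\vol{K}\vol{P_{e_3}K}=16/3>4=\vol{P_{\{1,3\}}K}\vol{P_{\{2,3\}}K}$). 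Hence Edmonds' greedy construction is not available, and your plan does not go through as stated. The paper circumvents this by working not with submodularity but with the full system of local Bollob\'as--Thomason cover inequalities \eqref{eq:ineqProj}: it takes a \emph{minimal} tuple $(t_\tau)_{\tau\subseteq\sigma}$ satisfying all Type-$(1)$ and Type-$(2)$ constraints (existence via Zorn's lemma), and then uses the combinatorics of irreducible covers to force $t_\tau=\prod_{i\in\tau}t_{\{i\}}$, which is exactly the base-polytope point you were seeking.
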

Let us remember that for any given $X\subset\mathbb R^n$, we denote by $\mathrm{conv}(X)$ the convex hull of $X$. For any $x,y\in\mathbb R^n$, let $[x,y]:=\mathrm{conv}(\{x,y\})$ be the line segment with extreme points $x,y$. For any $K,C\in\mathcal K^n$ the Minkowski sum of $K$ and $C$ is given by $K+C:=\{x+y:x\in K,y\in C\}$. Moreover, $t\cdot K:=\{tx:x\in K\}$ for every $t\geq 0$. 

Hanner polytopes interest partly relies on their connection to Mahler's conjecture. Indeed, they are supposed to be the only extremizers in the central version of it \cite{IS17}.

Bollob\'as and Thomason's proof of \eqref{eq:BolloThoma} depends on the following inequality
\begin{equation}\label{eq: Bollobas-Thomason}
    \vol{K}^s\leq\prod_{i=1}^m\vol{P_{H_{\sigma_i}}K}.
\end{equation}
see \cite[Thm.~2]{BT95}. Above, $K\in\mathcal K^n$ and $(\sigma_1,\dots,\sigma_m)$ is a $s$-cover of $[n]$. We say that $(\sigma_1,\dots,\sigma_m)$ is a $s$-cover of $\sigma\subseteq[n]$, if $\sigma_i\subseteq \sigma$, $i=1,\dots,m$, and $|\{i:j\in\sigma_i\}|=s$ for every $j\in\sigma$. Notice that \eqref{eq: Bollobas-Thomason} recovers the classical Loomis-Whitney inequality \cite{LM49} when $\sigma_i=[n]\setminus\{i\}$ for every $i\in[n]$.

In order to explain the characterization of the equality cases of \eqref{eq: Bollobas-Thomason}, we need further definitions. Given $(\sigma_1,\dots,\sigma_m)$ a $s$-cover of $\sigma\subseteq [n]$, we say that $(\overline{\sigma}_1,\dots,\overline{\sigma}_\ell)$ is the $1$-cover of $\sigma$ induced by the former $s$-cover, if $(\overline{\sigma}_1,\dots,\overline{\sigma}_\ell)$ are all the different non-empty possible subsets of the form $\cap_{j=1}^m \sigma_j^{\varepsilon(j)}$, where $\varepsilon(j)\in\{0,1\}$, $\sigma_j^0=\sigma_j$ and $\sigma_j^1=\sigma\setminus\sigma_j$ (see \cite{BT95, BKX23}). The equality case of \eqref{eq: Bollobas-Thomason} is characterized by the convex bodies of the form $K=\sum_{i=1}^\ell P_{H_{\overline{\sigma}_i}}K$, where $(\overline{\sigma}_1,\ldots,\overline{\sigma}_\ell)$ is the $1$-cover of $[n]$ induced by the $s$-cover of $[n]$, $(\sigma_1,\ldots,\sigma_m)$.

Years later, Brazitikos, Giannopoulos, and Liakopoulos \cite{BGL18} studied a generalization of these type of inequalities by taking $(\sigma_1,\ldots,\sigma_m)$ a $s$-cover of a proper subset $\sigma$ of $[n]$ and they called them local inequalities. Alonso-Guti\'errez, Bernu\'es, Brazitikos, and Carbery \cite{ABBC21} as well as Manui, Ndiaye, and Zvavitch \cite[Thm.~4]{MNZ24} have recently proven the following local Bollob\'as-Thomason type inequality
\begin{equation}\label{eq:ineqProj}
\vol{K}^{m-s}\vol{P_{H_\sigma^\bot}K}^s \leq \frac{\prod_{i=1}^m{n-|\sigma_i| \choose n-|\sigma|}}{{n\choose n-|\sigma|}^{m-s}} \prod_{i=1}^m\vol{P_{H_{\sigma_i}^\bot}K}.
\end{equation}
The previous inequality holds for every $K\in\mathcal K^n$, $\sigma\subset[n]$, and $(\sigma_1,\dots,\sigma_m)$ a $s$-cover of $\sigma$. Inequalities \eqref{eq:ineqProj} play a fundamental role in the proof of Theorem \ref{thm: isoperimetric main projections}. They attain equality when $K$ is a suitable Hanner polytope (see \cite[Rem.~5]{MNZ24}). However, it remains open to characterize the equality cases. For the sake of completeness, we prove that characterization below.

\begin{thm}\label{thm:LocalBollobasThomasonineq}
    Let $K\in\mathcal K^n$, $\sigma\subset[n]$, $(\sigma_1,\dots,\sigma_m)$ a $s$-cover of $\sigma$, and let $(\overline{\sigma}_1,\dots,\overline{\sigma}_k)$ be the cover induced by $(\sigma\setminus\sigma_1,\dots,\sigma\setminus\sigma_m)$. Then equality holds in \eqref{eq:ineqProj} if and only if there exists $y_0\in\mathbb{R}^n$, for every $y\in P_{H_\sigma^\bot}K$
    \begin{equation}\label{eq:Charact(1)}
        K\cap\left(y+H_\sigma\right) = z_1(y)+\sum_{j=1}^k P_{H_{\overline{\sigma}_j}\oplus H_\sigma^\bot}(K)\cap\left(y+H_{\overline{\sigma}_j}\right)
    \end{equation}
    for some $z_1(y)\in\mathbb R^n$,
    \begin{equation}\label{eq:Charact(2)}
    \begin{split}
        \sum_{j\in M_i} & P_{H_{\overline{\sigma}_j} \oplus H_\sigma^\bot}(K)\cap\left(y+H_{\overline{\sigma}_j}\right) \\
        &= z_2(y,i) + \left(1-\|y-y_0\|_{P_{H_\sigma^\bot}K}\right)\left( P_{H_{\sigma\setminus\sigma_i}\oplus H_\sigma^\bot}(K)\cap\left(y_0+H_{\sigma\setminus\sigma_i}\right)\right)
        % & = z_2(y,i)+(1-\|y-y_0\|_{P_{H_\sigma^\bot}K})\sum_{j\in M_i}P_{H_{\overline{\sigma}_j}\oplus H_\sigma^\bot}(K)\cap \left(y_0+H_{\overline{\sigma}_j}\right)
    \end{split}
    \end{equation}
    for every $i=1,\dots,m$ and some $z_2(y,i)\in\mathbb R^n$, and where $M_i:=\{j\in[k]:\overline{\sigma}_j\subset \sigma\setminus\sigma_i\}$, and
    \begin{equation}\label{eq:Charact(3)}
    \begin{split}
        &\vol{\sum_{j\in M_i}  P_{H_{\overline{\sigma}_j}\oplus H_\sigma^\bot}(K)\cap\left(y+H_{\overline{\sigma}_j}\right)}^\frac{1}{|\sigma|-|\sigma_i|} \\
        & = L_i 
        % \left(1-\|y-y_0\|_{P_{H_\sigma^\bot}K}\right)
        \vol{\sum_{j\in M_1}P_{H_{\overline{\sigma}_j}\oplus H_\sigma^\bot}(K)\cap\left(y_0+H_{\overline{\sigma}_j}\right)}^\frac{1}{|\sigma|-|\sigma_1|}
    \end{split}
    \end{equation}
for every $i\in[m]$ and for some absolute constants $L_i>0$.
\end{thm}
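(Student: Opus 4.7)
The plan is to carefully unfold the proof of \eqref{eq:ineqProj} given in \cite{ABBC21,MNZ24}, identify the two elementary inequalities that are chained together, and then analyse rigidity in each of them. The proof of \eqref{eq:ineqProj} combines a pointwise Bollob\'as--Thomason estimate on each fiber $K_y:=K\cap(y+H_\sigma)$, for $y\in D:=P_{H_\sigma^\bot}K$, with a Berwald-type integral inequality. Specifically, applying \eqref{eq: Bollobas-Thomason} inside $H_\sigma$ to $K_y$ against the $(m-s)$-cover $(\sigma\setminus\sigma_1,\ldots,\sigma\setminus\sigma_m)$ of $\sigma$ gives
$$\vol{K_y}^{m-s}\leq \prod_{i=1}^m\vol{P_{H_{\sigma\setminus\sigma_i}}K_y},$$
and Brunn's concavity principle guarantees that $y\mapsto \vol{K_y}^{1/|\sigma|}$ and $y\mapsto \vol{P_{H_{\sigma\setminus\sigma_i}}K_y}^{1/(|\sigma|-|\sigma_i|)}$ are concave on $D$. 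Integrating over $D$ and invoking Berwald's inequality on each of these concave functions produces the binomial constants $\binom{n-|\sigma_i|}{n-|\sigma|}$ and $\binom{n}{n-|\sigma|}^{m-s}$ in \eqref{eq:ineqProj}.

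For the forward direction I would first impose equality in the pointwise Bollob\'as--Thomason step at every $y\in D$. The rigidity of \eqref{eq: Bollobas-Thomason} recalled in the introduction forces $K_y$ to split as $z_1(y)+\sum_{j=1}^k P_{H_{\overline{\sigma}_j}}(K_y-y)$, where $(\overline{\sigma}_1,\ldots,\overline{\sigma}_k)$ is the $1$-cover of $\sigma$ induced by $(\sigma\setminus\sigma_i)$. A Fubini-type identification, using that projecting onto $H_\sigma^\bot\oplus H_{\overline{\sigma}_j}$ commutes with taking the $y$-slice (because the coordinate subspaces decouple orthogonally), rewrites each summand as $P_{H_{\overline{\sigma}_j}\oplus H_\sigma^\bot}(K)\cap(y+H_{\overline{\sigma}_j})$ up to the translation absorbed in $z_1(y)$, which is precisely \eqref{eq:Charact(1)}.

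Next I would impose equality in the Berwald step. Equality in
$$\int_D\phi(y)^{|\sigma|-|\sigma_i|}\,dy\geq \binom{n-|\sigma_i|}{n-|\sigma|}^{-1}\vol{D}\,\max_D\phi^{|\sigma|-|\sigma_i|}$$
for a non-negative concave $\phi$ on $D$ is rigid and forces $\phi(y)=c_i(1-\|y-y_{0,i}\|_D)$ for some $c_i>0$ and a maximiser $y_{0,i}\in D$. Applying this to $\phi_i:=\vol{P_{H_{\sigma\setminus\sigma_i}}K_\cdot}^{1/(|\sigma|-|\sigma_i|)}$ and observing that all the $\phi_i$ must share the same peak $y_0$ (otherwise a small perturbation inside $D$ destroys simultaneous equality in the product of Berwald estimates), the gauge identity on $D$ translates, upon raising to the $(|\sigma|-|\sigma_i|)$-th power and combining with the direct-sum structure \eqref{eq:Charact(1)}, into the slicewise identity \eqref{eq:Charact(2)}; the matching of the normalising constants $c_i$ across different $i$ is the consistency requirement that makes the pointwise Bollob\'as--Thomason equality and the $m$ Berwald equalities mutually compatible, and this is exactly the content of \eqref{eq:Charact(3)}.

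The converse direction should reduce to a direct computation: under \eqref{eq:Charact(1)}--\eqref{eq:Charact(3)} both sides of \eqref{eq:ineqProj} evaluate in closed form in terms of $\vol{D}$, $y_0$, and the volumes of the direct-sum factors at $y_0$, and the products and ratios of binomials telescope exactly to $\prod_{i=1}^m\binom{n-|\sigma_i|}{n-|\sigma|}/\binom{n}{n-|\sigma|}^{m-s}$. The main obstacle I anticipate is the common-peak argument in the Berwald step: one has to rule out that the different concave functions $\phi_i$ attain their maxima at distinct points of $D$ while equality is attained throughout, and then carefully absorb the per-slice translations into the $z_2(y,i)$ terms so that the affine gauge identities on $D$ lift faithfully to the slicewise Minkowski identities \eqref{eq:Charact(2)} without losing information about the direct summands.
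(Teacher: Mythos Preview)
Your overall strategy---track equality through each step of the proof of \eqref{eq:ineqProj}---is exactly what the paper does, but you have miscounted the steps. The proof in the Appendix chains \emph{three} inequalities, not two: the pointwise Bollob\'as--Thomason estimate \eqref{(1)}, then H\"older's inequality \eqref{(2)} to split $\int_D\prod_i f_i^{(|\sigma|-|\sigma_i|)/(m-s)}$ into $\prod_i\bigl(\int_D f_i^{|\sigma|}\bigr)^{(|\sigma|-|\sigma_i|)/(|\sigma|(m-s))}$, and only then Berwald's inequality \eqref{(3)} applied separately to each $f_i$. Your write-up skips the H\"older step entirely, and the displayed ``Berwald'' inequality you wrote is not the one that appears: the paper compares $\int_D f_i^{|\sigma|}$ to $\int_D f_i^{|\sigma|-|\sigma_i|}$ via \eqref{eq:Berwald}, not an integral to a maximum, and your inequality has the wrong direction in any case.

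This omission is precisely the source of what you flag as ``the main obstacle''. Equality in the H\"older step forces $\alpha_1 f_1(y)^{|\sigma|}=\cdots=\alpha_m f_m(y)^{|\sigma|}$ for a.e.\ $y\in D$, so all $f_i$ attain their maximum at the \emph{same} point $y_0$ automatically---no perturbation argument is needed---and this proportionality is also what yields \eqref{eq:Charact(3)}. Separately, equality in Berwald says each $\mathrm{hyp}(f_i)$ is a cone over $D\times\{0\}$ with apex $(y_0,\|f_i\|_\infty)$; but this is a statement about the \emph{volume} function $f_i(y)$, and to upgrade it to the \emph{set} identity \eqref{eq:Charact(2)} you need the equality case of Brunn--Minkowski on the slices (this is Lemma~\ref{lem:conicalEpi} in the paper). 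Your phrase ``the gauge identity on $D$ translates\ldots into the slicewise identity'' hides exactly this nontrivial step.
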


The paper is organized as follows. In Section \ref{sec:MainResult} we prove Theorem \ref{thm: isoperimetric main projections}. Afterwards in Section \ref{sec: characterization equality case} we prove the characterization of the equality case \eqref{eq:ineqProj}, i.e.~Theorem \ref{thm:LocalBollobasThomasonineq}. Finally in Appendix \ref{Sec:Appendix} we include for the sake of clearness the proof of \eqref{eq:ineqProj} as proven in \cite[Thm.~4]{MNZ24}.

\section{Isoperimetric local Bollob\'as-Thomason inequalities}\label{sec:MainResult}

Before proving Theorem \ref{thm: isoperimetric main projections}, we explain some facts about covers. First of all, we say that $(\sigma_1,\dots,\sigma_m)$ is a cover of $\sigma$ if it is a $s$-cover of $\sigma$ for some $s\geq 1$. Second,
we say that any $s$-cover $(\sigma_1,\dots,\sigma_m)$  of $\sigma\subseteq[n]$ is irreducible if $(\sigma_1,\dots,\sigma_m)$ cannot be separated into two subsets forming each of them a cover of $\sigma$. Otherwise, we call them reducible. The importance of irreducible covers relies in two facts: there is a finite number of irreducible covers for a given $\sigma\subseteq[n]$ (see \cite{BT95,G73}), and once \eqref{eq:ineqProj} is proven for every such irreducible cover, then one can immediately derive \eqref{eq:ineqProj} by just decomposing the corresponding cover into irreducible covers.

\begin{proof}[Proof of Theorem \ref{thm: isoperimetric main projections}]
Let $\sigma\subset[n]$ and $(\sigma_1,\ldots,\sigma_m)$ be any $s$-cover of $\sigma$. Then, by \eqref{eq:ineqProj} it follows that
\begin{equation}\label{eq: ineq 1 localBT}
   \vol{K}^{m-s}\vol{P_{H_\sigma^\bot}K}^s \leq \frac{\prod_{j=1}^m{n-|\sigma_j|\choose n-|\sigma|}}{{n\choose n-|\sigma|}^{m-s}}\prod_{j=1}^m\vol{P_{H_{\sigma_j}^\bot}K}.
\end{equation}
% In particular, it can be considered that the $s$-cover of $\sigma$ is irreducible. 
Dividing both sides by $\mathrm{vol}(P_{H_\sigma^\bot}K)^m$, which is not equal to zero since $K$ has dimension $n$, and taking into account that $H_{\sigma_j}^\bot=H_\sigma^\bot\oplus H_{\sigma\setminus\sigma_j}$
% $P_{H_{\sigma_j}^\bot}K=P_{H_{\sigma}^\bot\oplus H_{\sigma\setminus\sigma_j}}K$
for every $j=1,\ldots,m$, inequality \eqref{eq: ineq 1 localBT} can be rewritten as
\begin{equation}\label{eq: ineq 2 localBT}
   \left({n\choose n-|\sigma|}\frac{\vol{K}}{\vol{P_{H_\sigma^\bot}K}}\right)^{m-s} \leq \prod_{j=1}^m{n-|\sigma_j|\choose n-|\sigma|}\frac{\vol{P_{H_{\sigma}^\bot\oplus H_{\sigma\setminus\sigma_j}}K}}{\vol{P_{H_\sigma^\bot}K}}. 
\end{equation}

Now let $\tau\subseteq\sigma$ and $(\tau_1,\ldots,\tau_m)$ be any $s$-cover of $\tau$. Applying \eqref{eq:ineqProj} to the convex body $\widehat{K}:=P_{H_\sigma^\bot\oplus H_\tau}K$ of dimension $n-|\sigma|+|\tau|$ and to the $s$-cover of $\tau$, $(\tau_1,\ldots,\tau_m)$ 
% , and rewriting it as in \eqref{eq: ineq 2 localBT} 
we have that
\begin{equation}\label{eq: local para el cuerpo proyectado}
    \vol{\widehat{K}}^{m-s}\vol{P_{H_\tau^{\widehat{\bot}}}\widehat{K}}^s \leq \frac{\prod_{j=1}^m{n-|\sigma|+|\tau|-|\tau_j| \choose n-|\sigma|}}{{n-|\sigma|+|\tau|\choose n-|\sigma|}^{m-s}} \prod_{i=1}^m\vol{P_{H_{\tau_j}^{\widehat{\bot}}}\widehat{K}},
\end{equation}
where $H_\tau^{\widehat{\bot}}$ and $H_{\tau_j}^{\widehat{\bot}}$  denote the orthogonal subspaces of $H_\tau$ and $H_{\tau_j}$, respectively, within the subspace $H_\sigma^\bot\oplus H_\tau$, and $P_{H_\tau^{\widehat{\bot}}}$ and $P_{H_{\tau_j}^{\widehat{\bot}}}$  the corresponding orthogonal projections. Observe that 
\begin{equation}\label{eq: igualdad 1}
    H_\tau^{\widehat{\bot}}=H_\sigma^\bot \quad\mathrm{and} \quad
P_{H_\tau^{\widehat{\bot}}}\widehat{K}=P_{H_\tau^{\widehat{\bot}}}\left(P_{H_\sigma^\bot\oplus H_\tau}K\right)=P_{H_\sigma^\bot} K.
\end{equation}
 Similarly, 
 \begin{equation}\label{eq: igualdad 2}
     H_{\tau_j}^{\widehat{\bot}}=H_\sigma^\bot\oplus H_{\tau\setminus\tau_j}\quad\mathrm{and}\quad P_{H_{\tau_j}^{\widehat{\bot}}}\widehat{K}=P_{H_{\tau_j}^{\widehat{\bot}}}\left(P_{H_\sigma^\bot\oplus H_\tau}K\right)=P_{H_\sigma^\bot\oplus H_{\tau\setminus\tau_j}} K.
 \end{equation}
Therefore, dividing both sides of \eqref{eq: local para el cuerpo proyectado} by $\vol{P_{H_\sigma^\bot}K}^m$ and using \eqref{eq: igualdad 1} and \eqref{eq: igualdad 2} we get that
\begin{equation*}\label{}
   \left({n-|\sigma|+|\tau|\choose n-|\sigma|}\frac{\vol{P_{H_\sigma^\bot\oplus H_\tau}K}}{\vol{P_{H_\sigma^\bot}K}}\right)^{m-s} \leq \prod_{j=1}^m{n-|\sigma|+|\tau|-|\tau_j|\choose n-|\sigma|}\frac{\vol{P_{H_{\sigma}^\bot\oplus H_{\tau\setminus\tau_j}}K}}{\vol{P_{H_\sigma^\bot}K}}. 
\end{equation*}
% Notice that above the orthogonal of $H_\tau$ within the subspace $H_\sigma^\bot\oplus H_\tau$ is $H_\sigma^\bot$.
In particular, this can be applied to the $(|\tau|-1)$-cover of $\tau$ given by $(\tau\setminus\{j\}:j\in\tau)$, thus
\begin{equation}\label{eq: ineq 3 applied to tau}
   {n-|\sigma|+|\tau|\choose n-|\sigma|}\frac{\vol{P_{H_\sigma^\bot\oplus H_\tau}K}}{\vol{P_{H_\sigma^\bot}K}} \leq \prod_{j\in\tau}{n-|\sigma|+1\choose n-|\sigma|}\frac{\vol{P_{H_\sigma^\bot\oplus H_{\{j\}}}K}}{\vol{P_{H_\sigma^\bot}K}}. 
\end{equation}

Inequality \eqref{eq: ineq 2 localBT} can be applied to irreducible $s$-covers of $\sigma$. Hence, if we take all the irreducible $s$-covers of $\sigma,$ we have a finite number of inequalities of the type \eqref{eq: ineq 2 localBT}
% Now, we are considering all the inequalities of the form of \eqref{eq: ineq 2 localBT} in which the $s$-cover of $\sigma$, $(\sigma_1,\ldots,\sigma_m)$, is irreducible. Therefore, we have a finite number of inequalities
involving all the elements of the set $$\left\{{n-|\tau|\choose n-|\sigma|}\frac{\vol{P_{H_{\sigma}^\bot\oplus H_{\sigma\setminus\tau}}K}}{\vol{P_{H_\sigma^\bot}K}}:\tau\subseteq\sigma\right\}.$$
% involving all the irrreducible $s$-covers of $\sigma$. We do that because we are going to optimase .....ç
Now, we consider %\textcolor{red}{a} 
the set of 
%minimal 
positive real numbers $\{(x_\tau)_{\tau\subseteq\sigma}:x_\tau>0,\,\forall\,\tau\subseteq\sigma\}$ such that 
\begin{equation}\label{eq: cond 1 min problem}
    x_\sigma:={n\choose n-|\sigma|}\frac{\vol{K}}{\vol{P_{H_\sigma^\bot}K}},
\end{equation}
for any $\emptyset\neq\tau\subseteq\sigma$
\begin{equation}\label{eq: cond 2 min problem}
    x_{\sigma\setminus\tau}\leq {n-|\tau|\choose n-|\sigma|}\frac{\vol{P_{H_\sigma^\bot\oplus H_{\sigma\setminus\tau}}K}}{\vol{P_{H_\sigma^\bot}K}}
\end{equation}
and satisfying the following inequalities  
% \textcolor{blue}{No no, yo creo que aquí hay que ponerlo explícitamente, de lo contrario, no se entiende lo que se quiere decir, es muy ambigüo} 
% \eqref{eq: ineq 2 localBT}
 for any irreducible $s$-cover of $\sigma$, $(\sigma_1,\dots,\sigma_m)$,
 \begin{equation}\label{eq: cond 3 min problem}
     x_\sigma^{m-s} \leq \prod_{j=1}^m x_{\sigma\setminus\sigma_j},
 \end{equation}
referred as Type-$(1)$ inequalities,
and 
%when $${n-|\tau|\choose n-|\sigma|}\frac{\vol{P_{H_{\sigma}^\bot\oplus H_{\sigma\setminus\tau}}K}}{\vol{P_{H_\sigma^\bot}K}}$$ is replaced by $t_{\sigma\setminus\tau}$ for every $\tau\subseteq\sigma$.
\begin{equation}\label{eq: cond 4 min problem}
    x_\tau\leq\prod_{j\in\tau}x_{\{j\}},
\end{equation}
referred as Type-$(2)$ inequalities. We observe that Type-$(1)$ and Type-$(2)$ inequalities are inspired by \eqref{eq: ineq 2 localBT} and \eqref{eq: ineq 3 applied to tau}, respectively. 

%We will call inequality of type one to the ones obtained from \eqref{eq: ineq 2 localBT}, i.e., $t_\sigma^{m-s}\leq\prod_{j=1}^mt_{\sigma\setminus\sigma_j}$ where $(\sigma_1,\ldots,\sigma_m)$ is an irreducible $s$-cover of $\sigma$. And inequalities of type two will be the ones obtained from \eqref{eq: ineq 3 applied to tau}, i.e., $t_\tau\leq\prod_{j\in\tau}t_{\{j\}}$ for any $\tau\subseteq\sigma.$

In view of all the conditions above, there exists a tuple $(t_\tau)_{\tau\subseteq\sigma}$ which is minimal fulfilling the conditions \eqref{eq: cond 1 min problem}, \eqref{eq: cond 2 min problem}, \eqref{eq: cond 3 min problem} and \eqref{eq: cond 4 min problem} (see Lemma \ref{lem:minimalSet} for a detailed proof of this fact).

Moreover, by the minimality of $(t_\tau)_{\tau\subseteq\sigma}$, for every $i\in\sigma$, 
%the quantities $t_{\{i\}}$ are minimal, for each $i\in\sigma$ 
there is an inequality of Type-$(1)$ or Type-$(2)$ involving $t_{\{i\}}$ which is an equality.
If for some $i\in \sigma$ there is equality in an inequality of Type-$(1)$ this means that there exists $\beta(i):=(\sigma_1^i,\ldots,\sigma_{m_i}^i)$ an irreducible $s_i$-cover of $\sigma$ with $\sigma^i_{j_0}=\sigma\setminus\{i\}$ for some $j_0\in\{1,\ldots,m_i\}$ and such that
$$t_\sigma^{m_i-s_i}=\prod_{j=1}^{m_i}t_{\sigma\setminus\sigma^i_j}=t_{\{i\}}\prod_{j\in [m_i]\setminus\{j_0\}}t_{\sigma\setminus\sigma^i_j}.$$
If for some $i\in\sigma$ there is equality in an inequality of Type-$(2)$ this means that there exists a subset $\tau(i) $ of $\sigma$ such that $i\in\tau(i)$ and 
\begin{equation}\label{eq: equality type 2}
    t_{\tau(i)}=\prod_{j\in\tau(i)}t_{\{j\}}.
\end{equation}
Moreover, by the minimality of $(t_\tau)_{\tau\subseteq\sigma}$
%$t_{\tau(i)}$ 
there exists $(\sigma^i_1,\ldots,\sigma^i_{m_i})$ an irreducible $s_i$-cover of $\sigma$ such that $\sigma^i_{j_0}=\sigma\setminus\tau(i)$ and with equality in an inequality of Type-$(1)$. Otherwise we 
could replace $t_{\tau(i)}$ by a strictly smaller value
verifying all the conditions 
%of the minimality process 
posted above. Therefore,
\begin{eqnarray}\label{eq: equality type 2 reduction}
    t_\sigma^{m_i-s_i}&=&\prod_{j=1}^{m_i}t_{\sigma\setminus\sigma^i_j}=t_{\tau(i)}\prod_{j\in [m_i]\setminus\{j_0\}}t_{\sigma\setminus\sigma^i_j}\nonumber\\
    &=&\left(\prod_{j\in\tau(i)}t_{\{j\}}\right)\left(\prod_{j\in [m_i]\setminus\{j_0\}}t_{\sigma\setminus\sigma^i_j}\right),
\end{eqnarray}
where in the last equality we have used \eqref{eq: equality type 2}. Now, we notice that taking complementaries in $\sigma$ of each element of a $s$-cover of $\sigma$, with $m$ sets, transforms it into a $(m-s)$-cover. Thus, $(\sigma\setminus\sigma^i_1,\ldots,\sigma\setminus\sigma^i_{m_i})$ is a $(m_i-s_i)$-cover of $\sigma$. Moreover, since $\sigma\setminus\sigma^i_{j_0}=\tau(i)$, we have that $(\sigma\setminus\sigma^i_j:j\in[m_i]\setminus\{j_0\})\cup(\{j\}:j\in\tau(i))$ is also a $(m_i-s_i)$-cover of $\sigma$ with $m_i-1+|\tau(i)|$ sets. Therefore, $\beta(i):=(\sigma^i_j:j\in[m_i]\setminus\{j_0\})\cup(\sigma\setminus\{j\}:j\in\tau(i))$ is a $(s_i+|\tau(i)|-1)$-cover of $\sigma$ with equality in the inequality of Type-$(1)$ by \eqref{eq: equality type 2 reduction}.

Now we define $\beta:=\cup_{i\in\sigma}\beta(i)$. It is  a $s$-cover of $\sigma$ with 
\[
s=\sum_{i\in\mathcal{M}_1}s_i+\sum_{i\in\mathcal{M}_2}(s_i+|\tau(i)|-1),
\]
where $\mathcal{M}_1:=\{i\in\sigma: \text{ there exists }\,(\sigma^i_1,\ldots,\sigma^i_{m_i}) \text{ an irreducible }s_i\text{-cover of }\sigma \text{ with } \sigma^i_{j_0}=\sigma\setminus\{i\} \text{ for some } j_0\in[m_i]\text{ and equality in a Type-}(1) \text{ inequality}\}$
% $\mathcal{M}_1:=\{i\in\sigma: \mathrm{\,\,there\,\,exists\,\,}\,(\sigma^i_1,\ldots,\sigma^i_{m_i}) \mathrm{\,\,an\,\, irreducible\,\,}s_i\mathrm{-cover\,\,of\,\,}\sigma \mathrm{\,\,with\,\,} \sigma^i_{j_0}=\sigma\setminus\{i\} \mathrm{\,\,for \,\,some\,\,} j_0\in[m_i]\mathrm{\,\,and\,\,equality\,\,in\,\,an \,\,inequality\,\,of\,\,Type-}(1)\}$
and $\mathcal{M}_2:=\sigma\setminus \mathcal{M}_1.$ The number of elements of $\beta$ is
\[
m=\sum_{i\in\mathcal{M}_1}m_i+\sum_{i\in\mathcal{M}_2}(m_i-1+|\tau(i)|).
\]
Then, we have that the $s$-cover of $\sigma$ given by $\beta$ also has equality in the inequality of Type-$(1)$ applied to $\beta$ since
\begin{eqnarray*}
    t_\sigma^{m-s}&=&t_\sigma^{\sum_{i\in\mathcal{M}_1}(m_i-s_i)}t_\sigma^{\sum_{i\in\mathcal{M}_2}((m_i-1+|\tau(i)|)-(s_i+|\tau(i)|-1))}\nonumber\\
    &=&\left[\prod_{i\in\mathcal{M}_1}t_\sigma^{m_i-s_i}\right]\left[\prod_{i\in\mathcal{M}_2}t_\sigma^{m_i-s_i}\right]\nonumber\\
    &=&\left[\prod_{i\in\mathcal{M}_1}\left(\prod_{\tau\in\beta(i)}t_{\sigma\setminus\tau}\right)\right]\left[\prod_{i\in\mathcal{M}_2}\left(\prod_{\tau\in\beta(i)}t_{\sigma\setminus\tau}\right)\right]\nonumber\\
    &=&\prod_{\tau\in\beta}t_{\sigma\setminus\tau}.
\end{eqnarray*}
Considering $\beta'=\beta\setminus(\sigma\setminus\{i\}:i\in\sigma)$, which is a $(s-|\sigma|+1)$-cover of $\sigma$ with $m-|\sigma|$ elements, we can rewrite the equality above as
\begin{equation}\label{eq: eq equality beta}
    t_\sigma^{m-s}=\left(\prod_{\tau\in\beta'}t_{\sigma\setminus\tau}\right)\left(\prod_{i\in\sigma}t_{\{i\}}\right).
\end{equation}
Besides, applying inequality of Type-$(1)$ to the cover $\beta'$ (it can be also applied to reducible covers with the same steps done in \eqref{eq: eq equality beta} but with inequality instead of equality) 
%so that 
we get that
\begin{equation}\label{eq: ineq beta'}
  t_\sigma^{m-s-1}\leq\prod_{\tau\in\beta'}t_{\sigma\setminus\tau}. 
\end{equation}
Therefore, from \eqref{eq: eq equality beta} and \eqref{eq: ineq beta'} it follows that
\[
t_\sigma\geq\prod_{i\in\sigma}t_{\{i\}}.
\]
On the other hand, taking $\tau=\sigma$ in the inequality of Type-$(2)$ we have the reverse inequality, $t_\sigma\leq\prod_{i\in\sigma}t_{\{i\}}$. Therefore, 
\[
t_\sigma=\prod_{i\in\sigma}t_{\{i\}}.
\]

Let now $\tau\subset\sigma$. Since $\tau\cup(\{i\}:i\in\sigma\setminus\tau)$ is a $1$-cover of $\sigma$ with $|\sigma|-|\tau|+1$ sets, we have that $(\sigma\setminus\tau)\cup(\sigma\setminus\{i\}:i\in\sigma\setminus\tau)$ is a $(|\sigma|-|\tau|)$-cover of $\sigma$. Then,
\[
t_\sigma\leq t_\tau\left(\prod_{i\in\sigma\setminus\tau}t_{\{i\}}\right)\leq\left(\prod_{i\in\tau}t_{\{i\}}\right)\left(\prod_{i\in\sigma\setminus\tau}t_{\{i\}}\right)=\prod_{i\in\sigma}t_{\{i\}}=t_\sigma,
\]
where we have used the inequalities of Type-$(1)$ and Type-$(2)$ in the first and the second inequalities above, respectively.
%in the second inequality we have used the inequality of type two. 
Finally, we also deduce that \[
t_\tau=\prod_{i\in\tau}t_{\{i\}}.
\]

Let us now consider the convex body given by
\[
B_K:=\mathrm{conv}\left(\left\{\sum_{i\in\sigma}\frac{t_{\{i\}}}{2}[-e_i,e_i],L\right\}\right),
\]
  where $L$ is any $(n-|\sigma|)$-dimensional convex body contained in $H_\sigma^\bot$ with $\vol{L}=\vol{P_{H_\sigma^\bot}K}$.
   
Notice that
\begin{eqnarray*}
    \vol{B_K}&=&\frac{1}{{n \choose n-|\sigma|}}\left(\prod_{i\in\sigma}t_{\{i\}}\right)\vol{P_{H_\sigma^\bot}K}\\
    &=&\frac{1}{{n \choose n-|\sigma|}}t_\sigma\vol{P_{H_\sigma^\bot}K}\\
    &=&\vol{K},
\end{eqnarray*}
where above we have used equality case of \cite[Thm. 1]{RS58}. Notice also that for any $\tau\subseteq\sigma$ we have that
\begin{eqnarray*}
    \vol{P_{H_\sigma^\bot\oplus H_\tau}B_K}&=&\frac{1}{{n-|\sigma|+|\tau|\choose n-|\sigma|}}\left(\prod_{i\in\tau}t_{\{i\}}\right)\vol{P_{H_\sigma^\bot}K}\\
    &=&\frac{1}{{n-|\sigma|+|\tau|\choose n-|\sigma|}}t_\tau\vol{P_{H_\sigma^\bot}K}\\
    &\leq&\frac{1}{{n-|\sigma|+|\tau|\choose n-|\sigma|}}{n-|\sigma|+|\tau|\choose n-|\sigma|}\frac{\vol{P_{H_\sigma^\bot\oplus H_{\tau}}K}}{\vol{P_{H_\sigma^\bot}K}}\vol{P_{H_\sigma^\bot}K}\\
    &=&\vol{P_{H_\sigma^\bot\oplus H_\tau}K}.
\end{eqnarray*}
Concluding the proof of the result.
\end{proof}

As a consequence of Theorem \ref{thm: isoperimetric main projections} we show an isoperimetric type result for a suitable Hanner polytope.
\begin{cor}
Let $K\in\mathcal{K}^n$ and $\sigma\subset [n]$. Then, there exist some $c_i>0$, $i\in\sigma$, and a Hanner polytope $B_K$ of the form
    $$B_K=\mathrm{conv}\left(\left\{\sum_{i\in\sigma}c_i[-e_i,e_i],
    \sum_{i\notin\sigma}b_i[-e_i,e_i]
    \right\}\right)
    $$
    such that
    \begin{itemize}
        \item[i)] $\vol{K}=\vol{B_K}$,
        \item[ii)] $\vol{P_{H_\sigma^\bot\oplus H_\tau }K}\geq\vol{P_{H_\sigma^\bot\oplus H_\tau }B_K}$ for every $\tau\subseteq \sigma$ and
        \item[iii)] $2^{n-|\sigma|}\prod_{i\notin\sigma}b_i=\vol{P_{H_\sigma^\bot}K}$, for some $b_i>0$ with $i\in[n]\setminus\sigma.$
    \end{itemize}
\end{cor}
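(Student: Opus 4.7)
The plan is to deduce this corollary directly from Theorem~\ref{thm: isoperimetric main projections} by specializing the body $L$ to be a coordinate box in $H_\sigma^\bot$. First I would pick positive numbers $b_i$, $i\in[n]\setminus\sigma$, satisfying
$$2^{n-|\sigma|}\prod_{i\notin\sigma}b_i=\vol{P_{H_\sigma^\bot}K}$$
(for instance, take every $b_i$ equal to $\tfrac12\vol{P_{H_\sigma^\bot}K}^{1/(n-|\sigma|)}$) and set
$$L:=\sum_{i\notin\sigma}b_i[-e_i,e_i]\subset H_\sigma^\bot.$$
Then $L$ is an $(n-|\sigma|)$-dimensional convex body contained in $H_\sigma^\bot$ whose volume equals $\vol{P_{H_\sigma^\bot}K}$, which is exactly what Theorem~\ref{thm: isoperimetric main projections} requires of its auxiliary body $L$.

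Next I would invoke Theorem~\ref{thm: isoperimetric main projections} with this $K$, $\sigma$, and $L$. It produces constants $c_i>0$, $i\in\sigma$, so that
$$B_K=\mathrm{conv}\left(\left\{\sum_{i\in\sigma}c_i[-e_i,e_i],\sum_{i\notin\sigma}b_i[-e_i,e_i]\right\}\right)$$
already has precisely the form demanded by the corollary, and satisfies $\vol{K}=\vol{B_K}$ together with $\vol{P_{H_\sigma^\bot\oplus H_\tau}K}\geq\vol{P_{H_\sigma^\bot\oplus H_\tau}B_K}$ for every $\tau\subseteq\sigma$. Items (i) and (ii) are then the two conclusions of the theorem verbatim, and item (iii) is just the defining identity for the $b_i$.

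The only content beyond invoking Theorem~\ref{thm: isoperimetric main projections} is to verify that $B_K$ is a Hanner polytope, and this is where I expect the main (though quite minor) observation to sit. The box $\sum_{i\in\sigma}c_i[-e_i,e_i]$ is the $\ell_\infty$-sum of the symmetric intervals $c_i[-e_i,e_i]\subset\langle e_i\rangle$, so it is Hanner and supported in $H_\sigma$; likewise $\sum_{i\notin\sigma}b_i[-e_i,e_i]$ is Hanner and supported in $H_\sigma^\bot$. Since $H_\sigma$ and $H_\sigma^\bot$ are orthogonal and both boxes are centrally symmetric, the convex hull of their union coincides with their $\ell_1$-sum, which by the recursive definition of Hanner polytopes is again Hanner. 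Thus the argument is essentially a direct specialization of Theorem~\ref{thm: isoperimetric main projections}, with no substantive obstacle beyond recognizing that an $\ell_1$-sum of two coordinate boxes placed in complementary coordinate subspaces is Hanner.
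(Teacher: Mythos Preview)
Your proposal is correct and matches the paper's intent: the corollary is stated immediately after Theorem~\ref{thm: isoperimetric main projections} as a direct consequence, with no separate proof given, and your specialization of $L$ to a coordinate box in $H_\sigma^\bot$ with the prescribed volume is exactly the intended one-line deduction. The brief verification that the resulting $B_K$ is Hanner is a reasonable addition that the paper leaves implicit.
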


The following lemma assures the existence of such set $(t_\tau)_{\tau\subseteq\sigma
}$ of minimal positive real numbers satisfying the conditions of the problem used in the proof of Theorem \ref{thm: isoperimetric main projections}.

\begin{lemma}\label{lem:minimalSet}
Let $K\in\mathcal{K}^n$ and $\sigma\subset[n]$. There exists a set of minimal positive real numbers $\{t_\tau\in[C(K),D(K)]:\tau\subseteq\sigma\}$ satisfying the conditions \eqref{eq: cond 1 min problem}, \eqref{eq: cond 2 min problem}, \eqref{eq: cond 3 min problem} and \eqref{eq: cond 4 min problem}, where $C(K)$ and $D(K)$ are two absolute positive real constants.
\end{lemma}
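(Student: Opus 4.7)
The plan is to realize the sought tuple as a componentwise minimizer of a continuous, strictly monotone functional on the set $S$ of all tuples $(x_\tau)_{\tau\subseteq\sigma}$ of positive reals satisfying \eqref{eq: cond 1 min problem}, \eqref{eq: cond 2 min problem}, \eqref{eq: cond 3 min problem}, \eqref{eq: cond 4 min problem}. I would proceed in three steps: prove that $S$ is non-empty; establish uniform upper and lower bounds on the components of every $x\in S$; and finally minimize a strictly increasing continuous functional on the resulting compact set.

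For non-emptiness I would exhibit the canonical feasible point
\[
x^0_\tau := \binom{n-|\sigma|+|\tau|}{n-|\sigma|}\frac{\vol{P_{H_\sigma^\bot\oplus H_\tau}K}}{\vol{P_{H_\sigma^\bot}K}},\qquad \tau\subseteq\sigma,
\]
noticing that \eqref{eq: cond 1 min problem} holds at $\tau=\sigma$ and \eqref{eq: cond 2 min problem} holds with equality by construction; for any irreducible $s$-cover $(\sigma_1,\dots,\sigma_m)$ of $\sigma$ the Type-$(1)$ inequality \eqref{eq: cond 3 min problem} evaluated at $x^0$ is nothing but \eqref{eq: ineq 2 localBT}, while the Type-$(2)$ inequality \eqref{eq: cond 4 min problem} is exactly \eqref{eq: ineq 3 applied to tau}.

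The uniform upper bound $x_\tau\leq D(K):=\max_{\tau'\subseteq\sigma}x^0_{\tau'}$ is an immediate reformulation of \eqref{eq: cond 2 min problem} together with the fixed value at $\tau=\sigma$. The main obstacle, and the step to which I would devote most care, is obtaining a uniform positive lower bound on each $x_\tau$: the Type-$(1)$ and Type-$(2)$ constraints only bound products of entries from below, so I would use carefully chosen irreducible covers to isolate a single entry. Concretely, for each $\emptyset\neq\tau\subsetneq\sigma$ the pair $(\tau,\sigma\setminus\tau)$ is an irreducible $1$-cover of $\sigma$ (neither part alone covers $\sigma$), and the corresponding Type-$(1)$ inequality yields $x_\sigma\leq x_\tau x_{\sigma\setminus\tau}\leq D(K)\,x_\tau$; hence $x_\tau\geq x_\sigma/D(K)=:C(K)>0$. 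The edge case $\tau=\emptyset$ is handled by the trivial irreducible $1$-cover $(\sigma)$, which gives $1\leq x_\emptyset$, while $x_\sigma$ is fixed and positive. Together with closedness of $S$ (defined by finitely many polynomial weak inequalities and one equation), these uniform bounds show that $S$ is a non-empty compact subset of $[C(K),D(K)]^{2^{|\sigma|}}$.

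To finish I would minimize the continuous strictly coordinatewise-monotone functional $F(x):=\sum_{\tau\subseteq\sigma}x_\tau$ on the compact set $S$; any minimizer $(t_\tau)_{\tau\subseteq\sigma}$ is automatically componentwise minimal, since a feasible $(t'_\tau)\leq (t_\tau)$ with $t'\neq t$ would satisfy $F(t')<F(t)$, contradicting minimality. This componentwise minimality is precisely the property invoked throughout the proof of Theorem \ref{thm: isoperimetric main projections} to force, for each $i\in\sigma$, the Type-$(1)$ or Type-$(2)$ inequality involving $t_{\{i\}}$ to be an equality.
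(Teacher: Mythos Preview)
Your proof is correct and follows the same overall strategy as the paper: exhibit the canonical feasible point, obtain the uniform upper bound from \eqref{eq: cond 2 min problem}, the uniform lower bound from a Type-$(1)$ inequality applied to a suitable irreducible $1$-cover, and then extract a minimal element. The differences are only in the execution of the last two steps. For the lower bound the paper uses the $1$-cover $\{\sigma\setminus\tau\}\cup\{\{i\}:i\in\tau\}$, yielding $x_\tau\geq x_\sigma^{|\tau|}/\prod_{i\in\tau}d_{\sigma\setminus\{i\}}(K)$, while you use the simpler two-block cover $(\tau,\sigma\setminus\tau)$, yielding $x_\tau\geq x_\sigma/D(K)$; both are irreducible $1$-covers and either bound suffices. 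For the existence of a minimal element the paper appeals to Zorn's Lemma on the componentwise partial order, whereas you minimize the strictly monotone continuous functional $F(x)=\sum_{\tau}x_\tau$ on the compact set $S$; your route is slightly more elementary and sidesteps the implicit verification (needed for Zorn) that the infimum of a decreasing chain remains feasible.
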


\begin{proof}

Let us consider all the sets of positive real numbers indexed by $\tau\subseteq\sigma$ under the conditions  \eqref{eq: cond 1 min problem}, \eqref{eq: cond 2 min problem}, \eqref{eq: cond 3 min problem} and \eqref{eq: cond 4 min problem}, i.e.
\[
\begin{split}
    \mathcal{C}:=\{(x_\tau)_{\tau\subseteq\sigma}:x_\tau> 0,\,\,\forall\,\tau\subseteq\sigma, \,\text{ fulfilling \eqref{eq: cond 1 min problem}, \eqref{eq: cond 2 min problem}, \eqref{eq: cond 3 min problem}, \eqref{eq: cond 4 min problem}}\}
    %\text{ satisfying the conditions }\right\}. 
\end{split}
\]
Observe that by \eqref{eq: cond 1 min problem} $$x_\sigma={n
    \choose n-|\sigma|}\frac{\vol{K}}{\vol{P_{H_\sigma^\bot}K}}$$ for every $(x_{\tau})_{\tau\subseteq\sigma}\in\mathcal{C}$.
This set is not empty since precisely due to \eqref{eq: ineq 2 localBT}, \eqref{eq: ineq 3 applied to tau} the tuple 
\[
\left\{d_{\sigma\setminus\tau}(K):={n-|\tau|
    \choose n-|\sigma|}\frac{\vol{P_{H_\sigma^\bot\oplus H_{\sigma\setminus\tau}}K}}{\vol{P_{H_\sigma^\bot}K}}: \tau\subseteq\sigma\right\} 
\]
belongs to $\mathcal{C}.$ 

Now we are considering the following order relation. Given $(x_{\tau})_{\tau\subseteq\sigma}, (y_{\tau})_{\tau\subseteq\sigma}\in\mathcal{C}$, let us define the order 
\[
(x_\tau)_{\tau\subseteq\sigma}\prec (y_\tau)_{\tau\subseteq\sigma} \quad :\Leftrightarrow \quad x_\tau\leq y_\tau\text{  for all }\tau\subseteq\sigma.
\]
Then, $(\mathcal{C},\prec)$ is a (partially) ordered set. 

Let us notice that from \eqref{eq: cond 2 min problem} we have that 
\[
x_\tau \leq d_\tau(K)={n-|\sigma|+|\tau|\choose n-|\sigma|}\frac{\vol{P_{H_\sigma^\bot\oplus H_{\tau}}K}}{\vol{P_{H_\sigma^\bot}K}},
\]
for every $\tau\subseteq\sigma$, and let $D(K):=\max\{d_\tau(K):\tau\subseteq\sigma\}$. For a fixed $\tau\subseteq\sigma$, consider the irreducible $1$-cover $\{\sigma\setminus\tau\}\cup\{i:i\in\tau\}$ of $\sigma$. By \eqref{eq: cond 3 min problem} we obtain that
\[
x_\sigma^{|\tau|}=x_\sigma^{|\tau|+1-1} \leq x_\tau\left(\prod_{i\in\tau}x_{\sigma\setminus\{i\}}\right),
\]
for any $(x_\tau)_{\tau\subseteq\sigma}\in\mathcal{C}.$
Then
\[
x_\tau \geq \frac{x_\sigma^{|\tau|}}{\prod_{i\in\tau}x_{\sigma\setminus\{i\}}} \geq 
\frac{x_\sigma^{|\tau|}}{\prod_{i\in\tau}d_{\sigma\setminus\{i\}}(K)} =: c_\tau(K).
\]
Finally, let $C(K):=\min\{c_\tau(K):\tau\subseteq\sigma\}$. Then for every $(x_\tau)_{\tau\subseteq\sigma}\in\mathcal C$ we have that $C(K) \leq x_\tau \leq D(K)$ for every $\tau\subseteq\sigma$. In particular, every decreasing chain $\{(x^\ell_\tau)_{\tau\subseteq\sigma}\}_{\ell\in\mathbb{N}}\subset\mathcal{C}$ is lower bounded 
% by $(C(K))_{\tau\subseteq\sigma}$, 
and hence by Zorn's Lemma there exists a minimal element in $\mathcal C$.
\end{proof}

\begin{rmk}
    Notice that in Lemma \ref{lem:minimalSet} we may have omitted in the argument the restriction to irreducible covers of $\sigma$.
\end{rmk}

\section{Characterization of equality case of local Bollob\'as-Thomason inequalities}\label{sec: characterization equality case}

The Brunn-Minkowski inequality \cite{AGM15} states that for every $K,C\in\mathcal K^n$, and every $\lambda\in[0,1]$, it holds that
\begin{equation}\label{eq:BrunnMinkowski}
\vol{(1-\lambda)K+\lambda C}^\frac1n \geq (1-\lambda)\vol{K}^\frac1n+\lambda\vol{C}^\frac1n.
\end{equation}
Moreover, equality holds if and only either $K$ and $C$ are rescales of each other or $K$ and $C$ are contained in parallel hyperplanes.

For every $f:\mathbb R^n\rightarrow\mathbb{R}$, let $\mathrm{hyp}(f):=\{(x,t):t\leq f(x)\}$ be the hypograph of $f$. We say that $f$ is $1/k$-concave if $f^{\frac1k}$ is a concave function with $k>0$. Recall that a function is concave if and only if its hypograph is a convex set. Moreover, for every $K\in\mathcal{K}^n$ containing the origin and $x\in\mathbb R^n$ we define the Minkowski functional of $x$ respect to $K$ as $\Vert x\Vert_K:=\min\{\lambda\geq 0:x\in\lambda K\}$. The set of all subspaces of dimension $i\in[n]$ in $\mathbb R^n$ will be denoted by  $\mathcal L^n_i$.

We need the following lemma.
\begin{lemma}\label{lem:conicalEpi}
Let $K\in\mathcal K^n$, $H\in\mathcal L^n_i$
% , $y_0\in P_HK$,
and let $f:P_HK\rightarrow[0,\infty)$ be the concave function given by
$$f(y):=\vol{K\cap\left(y+H^\bot\right)}^\frac{1}{n-i}.$$
Let $y_0\in P_HK$ be such that
$$
\mathrm{hyp}(f)=\mathrm{conv}(P_HK\times\{0\},y_0\times\|f\|_\infty).
$$
% \[
% f(y):=\vol{K\cap\left(y+H^\bot\right)}^\frac{1}{n-i}\quad\text{with}\quad \mathrm{epi}(f)=\mathrm{conv}(P_HK\times\{0\},y_0\times\|f\|_\infty).
% \]
Then for every $y\in P_HK$ we have that
\[
K\cap\left(y+H^\bot\right)=z(y)+(1-\|y-y_0\|_{P_HK})\left(K\cap\left(y_0+H^\bot\right)\right),
\]
for some $z(y)\in \mathbb R^n$.
\end{lemma}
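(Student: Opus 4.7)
The plan is to combine the equality case of the Brunn--Minkowski inequality with the conical shape of $\mathrm{hyp}(f)$ to force every slice of $K$ to be a homothetic translate of the slice at $y_0$. Fix $y\in P_HK$ with $y\neq y_0$; since $y_0$ must lie in the relative interior of $P_HK$ (otherwise the apex $(y_0,\|f\|_\infty)$ would sit in the base at height $0$, forcing $\|f\|_\infty=0$), the ray from $y_0$ through $y$ meets $\partial P_HK$ at a unique point $y^*$, and we can write $y=(1-t)y_0+ty^*$ with $t=\|y-y_0\|_{P_HK}\in[0,1]$. Denote the three translated slices $A_0:=\{z\in H^\bot:y_0+z\in K\}$, $A^*:=\{z\in H^\bot:y^*+z\in K\}$, and $A(y):=\{z\in H^\bot:y+z\in K\}$, so that $\vol{A(y)}=f(y)^{n-i}$.

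First I would read off from the conical hypothesis that $f(y)=(1-t)\|f\|_\infty$ along the ray, and in particular $f(y^*)=0$, so $\vol{A^*}=0$; on the other hand $\vol{A_0}=\|f\|_\infty^{n-i}>0$ since $K$ has non-empty interior. By convexity of $K$, $A(y)\supseteq(1-t)A_0+tA^*$; Brunn--Minkowski \eqref{eq:BrunnMinkowski} applied to the right-hand side, together with $\vol{A^*}=0$, yields $\vol{(1-t)A_0+tA^*}\geq(1-t)^{n-i}\vol{A_0}$; and the conical shape of $\mathrm{hyp}(f)$ gives $\vol{A(y)}=(1-t)^{n-i}\vol{A_0}$. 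Sandwiching these three quantities, the containment must be an equality, $A(y)=(1-t)A_0+tA^*$, and Brunn--Minkowski is sharp.

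The main step is to deduce from this sharpness that $A^*$ reduces to a single point $z^*(y^*)\in H^\bot$. The equality case recalled after \eqref{eq:BrunnMinkowski} is stated for full-dimensional bodies, whereas $\vol{A^*}=0$; one bypasses this by writing $(1-t)A_0+tA^*=\bigcup_{z\in A^*}\bigl((1-t)A_0+tz\bigr)$ as a union of translates of the full-dimensional body $(1-t)A_0$, and noting that equality of volumes with $(1-t)^{n-i}\vol{A_0}$ forces all these translates to coincide, so $A^*$ is a singleton. Then $A(y)=(1-t)A_0+tz^*(y^*)$ is a translate of $(1-t)A_0$, and translating back to the ambient space gives
\[
K\cap(y+H^\bot)=z(y)+(1-t)\bigl(K\cap(y_0+H^\bot)\bigr)
\]
with $z(y):=y+tz^*(y^*)-(1-t)y_0$, which is the desired identity since $1-t=1-\|y-y_0\|_{P_HK}$; the case $y=y_0$ is trivial with $z(y_0)=0$. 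The main obstacle is precisely this zero-volume singleton reduction, which is what allows the Brunn--Minkowski equality case to be applied despite $A^*$ being degenerate.
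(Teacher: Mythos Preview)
Your argument is correct and follows the same route as the paper: express $y$ as a convex combination of $y_0$ and a boundary point $y^*$, combine the convexity of $K$ with Brunn--Minkowski and the conical hypothesis to force equality, and read off the homothety of slices. You are in fact more careful than the paper at the last step, since the equality characterization stated after \eqref{eq:BrunnMinkowski} does not directly cover a zero-volume summand, and your union-of-translates argument showing $A^*$ is a singleton supplies exactly the justification the paper glosses over. One minor slip: your parenthetical reason for $y_0\in\mathrm{relint}(P_HK)$ is incorrect (take $K$ itself a cone with apex over a boundary point of $P_HK$), but this is harmless, because for any $y\in P_HK$ with $y\neq y_0$ the ray from $y_0$ through $y$ still exits $P_HK$ at a well-defined farthest point $y^*$, and the rest of your argument goes through unchanged.
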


\begin{proof}
    First of all, for any $y\neq y_0$, $y\in P_HK$, notice that since we can write 
    \[
    y=\|y-y_0\|_{P_HK}\left(y_0+\frac{y-y_0}{\|y-y_0\|_{P_HK}}\right)+(1-\|y-y_0\|_{P_HK})y_0,
    \]
    the condition about the hypograph above can be equivalently rewritten as
    \[
    \begin{split}
            f(y) & =\|y-y_0\|_{P_HK}f\left(y_0+\frac{y-y_0}{\|y-y_0\|_{P_HK}}\right)+(1-\|y-y_0\|_{P_HK})f(y_0) \\
            & =(1-\|y-y_0\|_{P_HK})\|f\|_\infty
    \end{split}
    \]
    for every $y\in P_HK$. That means
    \begin{equation}\label{eq:equalCone}
    \vol{K\cap\left(y+H^\bot\right)}^\frac{1}{n-i} = (1-\|y-y_0\|_{P_HK}) \vol{K\cap\left(y_0+H^\bot\right)}^\frac{1}{n-i}
    \end{equation}
    for every $y\in P_HK$. Notice that by the convexity of $K$ we have that
    \[
    \begin{split}
        K\cap\left(y+H^\bot\right) \supseteq \|y-y_0\|_{P_HK} \left(K\cap\left(y_0+\frac{y-y_0}{\|y-y_0\|_{P_HK}}+H^\bot\right)\right) & \\
        +(1-\|y-y_0\|_{P_HK})\left(K\cap\left(y_0+H^\bot\right)\right). &
    \end{split}
    \]
    By \eqref{eq:BrunnMinkowski} we get that
    \[
    \begin{split}
        \vol{K\cap\left(y+H^\bot\right)}^\frac{1}{n-i} & \geq  \|y-y_0\|_{P_HK} \mathrm{vol}\left(K\cap\left(y_0+\frac{y-y_0}{\|y-y_0\|_{P_HK}}+H^\bot\right)\right)^\frac{1}{n-i} \\
       & +(1-\|y-y_0\|_{P_HK})\vol{K\cap\left(y_0+H^\bot\right)}^\frac{1}{n-i} \\
       & = (1-\|y-y_0\|_{P_HK})\vol{K\cap\left(y_0+H^\bot\right)}^\frac{1}{n-i}.
    \end{split}
    \]
    This together with \eqref{eq:equalCone} implies equality in Brunn-Minkowski inequality above. Thus
    \[
    K\cap \left(y+H^\bot\right) = z(y)+(1-\|y-y_0\|_{P_HK})\left(K\cap\left(y_0+H^\bot\right)\right)
    \]
    for every $y\in P_HK$ and for some $z(y)\in \mathbb R^n$.
\end{proof}

Berwald inequality \cite{B47} is a kind of reverse inequality to H\"older inequality for concave functions. It states that for every $0<\gamma_1<\gamma_2$, $K\in\mathcal K^n$, $f:K\rightarrow[0,\infty)$ concave, continuous and not identically null, then
    \begin{equation}\label{eq:Berwald}
    \left(\frac{{\gamma_2+n \choose n}}{\mathrm{vol}(K)}\int_Kf(x)^{\gamma_2} dx\right)^{\frac{1}{\gamma_2}}
    \leq \left(\frac{{\gamma_1+n \choose n}}{\mathrm{vol}(K)}\int_Kf(x)^{\gamma_1} dx\right)^{\frac{1}{\gamma_1}}.
    \end{equation}
Moreover, equality holds above if and only if $\mathrm{hyp}(f)=\mathrm{conv}(K\times\{0\},y_0\times\|f\|_\infty)$, for some $y_0\in\mathrm{supp}(f)$.

\begin{proof}[Proof of Theorem \ref{thm:LocalBollobasThomasonineq}]
The proof is based on a detailed analysis of the proof of the inequality \eqref{eq:ineqProj}, paying attention to the conditions under which inequalities become equalities. For that reason and for the readers' convenience we include in Proposition \ref{prop:localBollobasThomason} of the Appendix \ref{Sec:Appendix} the proof of \eqref{eq:ineqProj} as given in \cite[Thm.~4]{MNZ24} by Manui, Ndiaye and Zvavitch.  

In order to have equality in \eqref{eq:ineqProj}, we must have equality in \eqref{(1)}, \eqref{(2)}, and \eqref{(3)}.

Equality holds in \eqref{(1)} if and only if for almost every $y\in P_{H_\sigma^\bot}K$ we have that
\[
\begin{split}
K\cap(y+H_\sigma) & = z_1(y) + \sum_{j=1}^k P_{y+H_{\overline{\sigma}_j}}(K\cap(y+H_\sigma)) \\
& = z_1(y)+\sum_{j=1}^k P_{H_{\overline{\sigma}_j}\oplus H_\sigma^\bot}(K)\cap\left(y+H_{\overline{\sigma}_j}\right). 
%\textcolor{red}{= \sum_{j=1}^k \textcolor{red}{K\cap(y+H_{\overline{\sigma}_j})}},
\end{split}
\]
where $(\overline{\sigma}_1,\dots,\overline{\sigma}_k)$ denotes the induced cover of $(\sigma\setminus\sigma_1,\dots,\sigma\setminus\sigma_m)$ and where $z_1(y)\in\mathbb R^n$, see equality case of \eqref{eq: Bollobas-Thomason}. 
% Thus
% \[
% K\cap(y+H_\sigma) = z_1(y) + \sum_{j=1}^k P_{H_{\overline{\sigma}_j}\oplus H_\sigma^\bot}(K)\cap \left(y+H_{\overline{\sigma}_j}\right)
% \]
% for almost every $y\in P_{H_\sigma^\bot}K$.
Indeed, by the continuity of the arguments, we get the identity above for every $y\in \mathrm{relint}(P_{H_\sigma^\bot}K)$, and again using the continuity we get equality for every $y\in P_{H_\sigma^\bot}K$, proving \eqref{eq:Charact(1)}.

Equality in \eqref{(2)} holds if and only if there exist $\alpha_1,\dots,\alpha_m>0$ such that
\[
\alpha_1 f_1(y)^{\frac{|\sigma|-|\sigma_1|}{m-s}p_1} = \cdots = \alpha_mf_m(y)^{\frac{|\sigma|-|\sigma_m|}{m-s}p_m}
\]
i.e.
\begin{equation}\label{eq:ConseOf(2)}
\begin{split}
 & \alpha_1\vol{P_{H_{\sigma\setminus\sigma_1}}(K\cap(y+H_\sigma))}^\frac{|\sigma|}{|\sigma|-|\sigma_1|} \\
 & =\cdots=
\alpha_m\vol{P_{H_{\sigma\setminus\sigma_m}}(K\cap(y+H_\sigma))}^\frac{|\sigma|}{|\sigma|-|\sigma_m|}.
\end{split}
\end{equation}
In particular, there exists $y_0\in P_{H_\sigma^\bot}K$ such that $f_i(y_0)=\|f_i\|_\infty$, for every $i=1,\dots,m$. 

Equality holds in \eqref{(3)} if and only if 
\[
\mathrm{hyp}(f_i)=\mathrm{conv}\left(P_{H_\sigma^\bot}K\times\{0\},y_0\times \|f_i\|_\infty\right),
\]
for every $i=1,\dots,m$. Notice that 
\begin{equation}\label{eq:Prop1}
P_{H_{\sigma\setminus\sigma_i}}(K\cap(y+H_\sigma)) = z_3(y,i)+P_{H_{\sigma\setminus\sigma_i}\oplus H_\sigma^\bot}(K)\cap\left(y+H_{\sigma\setminus\sigma_i}\right),
\end{equation}
for some $z_3(y,i)\in H_\sigma^\bot$. Thus
\[
f_i(y)=\vol{P_{H_{\sigma\setminus\sigma_i}\oplus H_\sigma^\bot}(K)\cap\left(y+H_{\sigma\setminus\sigma_i}\right)}^\frac{1}{|\sigma|-|\sigma_i|}.
\]
Since $P_{H_{\sigma\setminus\sigma_i}^\bot}\left(P_{H_{\sigma\setminus\sigma_i}\oplus H_\sigma^\bot}\right)=P_{H_\sigma^\bot}$, by Lemma \ref{lem:conicalEpi} we obtain that
\[
\begin{split}
& P_{H_{\sigma\setminus\sigma_i}\oplus H_\sigma^\bot}(K)\cap\left(y+H_{\sigma\setminus\sigma_i}\right)  \\
& = z_4(y,i)+\left(1-\|y-y_0\|_{P_{H_\sigma^\bot}K}\right)\left(P_{H_{\sigma\setminus\sigma_i}\oplus H_\sigma^\bot}(K)\cap\left(y_0+H_{\sigma\setminus\sigma_i}\right)\right).
\end{split}
\]
Now notice that the induced cover $(\overline{\sigma}_1,\dots,\overline{\sigma}_k)$ fulfills $\overline{\sigma}_j\subset \sigma_i$ or $\overline{\sigma}_j\subset \sigma\setminus\sigma_i$, for every $i=1,\dots,m$ and every $j=1,\dots,k$. Hence
\begin{equation*}
\begin{split}
 P_{H_{\sigma\setminus\sigma_i} \oplus H_\sigma^\bot} & (K)\cap\left(y+H_{\sigma\setminus\sigma_i}\right) \\
& = -z_3(y,i)+P_{H_{\sigma\setminus\sigma_i}}(K\cap(y+H_\sigma)) \\
& = 
-z_3(y,i)+P_{H_{\sigma\setminus\sigma_i}} \left(z_1(y)+\sum_{j=1}^k P_{H_{\overline{\sigma}_j}\oplus H_\sigma^\bot}(K)\cap\left(y+H_{\overline{\sigma}_j}\right)\right) \\
& = z_5(y,i) + \sum_{j=1}^k P_{H_{\sigma\setminus\sigma_i}}\left(P_{H_{\overline{\sigma}_j}\oplus H_\sigma^\bot}(K)\cap\left(y+H_{\overline{\sigma}_j}\right)\right) \\
& = z_5(y,i) + \sum_{j\in M_i}P_{H_{\overline{\sigma}_j}\oplus H_\sigma^\bot}(K)\cap\left(y+H_{\overline{\sigma}_j}\right),
\end{split}
\end{equation*}
where $M_i:=\{j\in[k]:\overline{\sigma}_j\subset \sigma\setminus\sigma_i\}$. Mixing the last two equations above we obtain
\begin{equation}\label{eq:Prop2}
\begin{split}
& \sum_{j\in M_i}P_{H_{\overline{\sigma}_j}\oplus H_\sigma^\bot}(K)\cap\left(y+H_{\overline{\sigma}_j}\right) \\
& = z_2(y,i) + \left(1-\|y-y_0\|_{P_{H_\sigma^\bot}K}\right)\left( P_{H_{\sigma\setminus\sigma_i}\oplus H_\sigma^\bot}(K)\cap\left(y_0+H_{\sigma\setminus\sigma_i}\right)\right),
\end{split}
\end{equation}
where $z_2(y,i)=z_4(y,i)-z_5(y,i)$, which shows \eqref{eq:Charact(2)}.

Finally, \eqref{eq:Charact(3)} is simply a consequence of \eqref{eq:Prop1},\eqref{eq:Prop2} together with \eqref{eq:ConseOf(2)}, because
\[
\begin{split}
\mathrm{vol} & \left(\sum_{j\in M_i}P_{H_{\overline{\sigma}_j}\oplus H_\sigma^\bot}(K)\cap\left(y+H_{\overline{\sigma}_j}\right)\right)^\frac{1}{|\sigma|-|\sigma_i|} \\
& = 
L_i\mathrm{vol}\left(\sum_{j\in M_1}P_{H_{\overline{\sigma}_j}\oplus H_\sigma^\bot}(K)\cap\left(y+H_{\overline{\sigma}_j}\right)\right)^\frac{1}{|\sigma|-|\sigma_1|},
\end{split}
\]
and where $L_i:=\left(\frac{\alpha_1}{\alpha_i}\right)^{\frac{1}{|\sigma|}}$, concluding \eqref{eq:Charact(3)}.
\end{proof}

\section{Appendix}\label{Sec:Appendix}

The following Proposition was proven by Manui, Ndiaye, and Zvavitch in \cite[Thm.~4]{MNZ24}. For a different proof, see \cite[Thm.~1.5]{ABBC21}.

\begin{propo}\label{prop:localBollobasThomason}
    Let $\sigma\subset[n]$, and let $(\sigma_1,\dots,\sigma_m)$ be a $s$-cover of $\sigma$. For every $K\in\mathcal K^n$ we have that
    \begin{equation*}
    \mathrm{vol}(K)^{m-s}\vol{P_{H_\sigma^\bot}K}^s \leq \frac{\prod_{i=1}^m{n-|\sigma_i|\choose n-|\sigma|}}{{n\choose n-|\sigma|}^{m-s}}\prod_{i=1}^m\vol{P_{H_{\sigma_i}^\bot}K}.
    \end{equation*}

\end{propo}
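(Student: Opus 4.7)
The plan is a three-step procedure that combines the classical Bollob\'as-Thomason inequality \eqref{eq: Bollobas-Thomason} applied fiberwise, H\"older's inequality with a carefully chosen set of exponents, and Berwald's inequality \eqref{eq:Berwald}. Write $K_0:=P_{H_\sigma^\bot}K$ and, for $v\in K_0$, let $K_v:=K\cap(v+H_\sigma)$ denote the corresponding fiber inside $H_\sigma$. For each $i\in[m]$, set $p_i:=|\sigma|-|\sigma_i|$ and define
\[
f_i(v):=\vol{P_{H_{\sigma\setminus\sigma_i}}K_v}^{1/p_i},\qquad v\in K_0.
\]
The Brunn-Minkowski inequality \eqref{eq:BrunnMinkowski} guarantees that each $f_i$ is concave on $K_0$, and since $H_{\sigma_i}^\bot=H_\sigma^\bot\oplus H_{\sigma\setminus\sigma_i}$, Fubini yields $\vol{P_{H_{\sigma_i}^\bot}K}=\int_{K_0}f_i^{p_i}\,dv$, while $\vol{K}=\int_{K_0}\vol{K_v}\,dv$.

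The first step applies \eqref{eq: Bollobas-Thomason} to each $K_v\subset H_\sigma$ via the complementary family $(\sigma\setminus\sigma_1,\dots,\sigma\setminus\sigma_m)$, which is an $(m-s)$-cover of $\sigma$. This yields $\vol{K_v}^{m-s}\leq\prod_i f_i(v)^{p_i}$, and integration over $K_0$ gives
\[
\vol{K}\leq\int_{K_0}\prod_i f_i(v)^{p_i/(m-s)}\,dv.
\]
The second step applies H\"older's inequality with exponents $r_i:=|\sigma|(m-s)/p_i$. The identity $\sum_i p_i=(m-s)|\sigma|$ (obtained by double-counting the $s$-cover, $\sum_i|\sigma_i|=s|\sigma|$) ensures $\sum_i 1/r_i=1$, so
\[
\int_{K_0}\prod_i f_i^{p_i/(m-s)}\,dv\leq\prod_i\left(\int_{K_0}f_i^{|\sigma|}\,dv\right)^{p_i/((m-s)|\sigma|)}.
\]
The third step applies \eqref{eq:Berwald} to each concave $f_i$ on $K_0$ (of dimension $n-|\sigma|$) with $\gamma_1=p_i$ and $\gamma_2=|\sigma|$, and rearrangement gives
\[
\int_{K_0}f_i^{|\sigma|}\,dv\leq\frac{\binom{n-|\sigma_i|}{n-|\sigma|}^{|\sigma|/p_i}}{\binom{n}{n-|\sigma|}}\vol{K_0}^{1-|\sigma|/p_i}\vol{P_{H_{\sigma_i}^\bot}K}^{|\sigma|/p_i}.
\]
Substituting these bounds in sequence, raising the resulting estimate on $\vol{K}$ to the $(m-s)$-th power, and using $\sum_i p_i/((m-s)|\sigma|)=1$ to collect all $\vol{K_0}$ factors into the single power $\vol{K_0}^{-s}$ produces the stated inequality.

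I expect the main obstacle to be identifying the coordinated choice of H\"older exponents $r_i=|\sigma|(m-s)/p_i$ and Berwald endpoint $\gamma_2=|\sigma|$: this pairing is exactly what makes the binomial coefficients telescope correctly. The Berwald denominator $\binom{n}{n-|\sigma|}$ accumulates across $i$ and across the final power to give $\binom{n}{n-|\sigma|}^{m-s}$, while each Berwald numerator $\binom{n-|\sigma_i|}{n-|\sigma|}^{|\sigma|/p_i}$, after being raised to the H\"older weight $p_i/((m-s)|\sigma|)$ and then to the $(m-s)$-th power, contributes exactly one factor of $\binom{n-|\sigma_i|}{n-|\sigma|}$, producing the numerator $\prod_i\binom{n-|\sigma_i|}{n-|\sigma|}$. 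Once the exponents are pinned down, each individual step is a direct application of a named inequality and the remaining bookkeeping is routine.
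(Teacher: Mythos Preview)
Your proof is correct and follows essentially the same route as the paper's proof: fiberwise Bollob\'as--Thomason on the complementary $(m-s)$-cover, H\"older with exponents $r_i=|\sigma|(m-s)/p_i$, and Berwald on each concave $f_i$ with $\gamma_1=p_i$, $\gamma_2=|\sigma|$. The only differences are notational (your $p_i$ is the paper's $|\sigma|-|\sigma_i|$, your $r_i$ is the paper's $p_i$) and that you spell out the final collection of binomial and $\vol{K_0}$ factors, which the paper leaves implicit.
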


\begin{proof}
    For every $y\in P_{H_\sigma^\bot}K$, we apply Bollob\'as-Thomason inequality \eqref{eq: Bollobas-Thomason} to $K\cap(y+H_\sigma)$ and to the $(m-s)$-cover $(\sigma\setminus\sigma_1,\dots,\sigma\setminus\sigma_m)$, obtaining
    \[
    \vol {K\cap(y+H_\sigma)} \leq \prod_{i=1}^m\vol{P_{H_{\sigma\setminus\sigma_i}}(K\cap(y+H_\sigma))}^\frac{1}{m-s}.
    \]
    Thus 
    \begin{equation}\label{(1)}
    \begin{split}
        \mathrm{vol}(K) & = \int_{P_{H_\sigma^\bot}K}\vol{K\cap(y+H_\sigma)}dy \\
        & \leq \int_{P_{H_\sigma^\bot}K} \prod_{i=1}^m\vol{P_{H_{\sigma\setminus\sigma_i}}(K\cap(y+H_\sigma))}^\frac{1}{m-s}dy.
    \end{split}
    \end{equation}
    Now let us define $f_i:P_{H_\sigma^\bot}K\rightarrow[0,\infty)$ given by
    \[
    f_i(y):=\vol{P_{H_{\sigma\setminus\sigma_i}}(K\cap(y+H_\sigma))}^\frac{1}{|\sigma|-|\sigma_i|}.
    \]
    Writing $p_i:=\frac{m-s}{|\sigma|-|\sigma_i|}|\sigma|$ for every $i=1,\dots,m$, we have that
    \[
    \frac{1}{p_1}+\cdots+\frac{1}{p_m} = \frac{1}{(m-s)|\sigma|}\sum_{i=1}^m(|\sigma|-|\sigma_i|)=1,
    \]
    and thus by H\"older inequality we then get that
    \begin{equation}\label{(2)}
    \begin{split}
    \int_{P_{H_\sigma^\bot}K} \prod_{i=1}^m f_i(y)^{\frac{|\sigma|-|\sigma_i|}{m-s}}dy & \leq 
     \prod_{i=1}^m\left( \int_{P_{H_\sigma^\bot}} f_i(y)^{p_i\frac{|\sigma|-|\sigma_i|}{m-s}} \right)^\frac{1}{p_i} \\
    & = \prod_{i=1}^m\left( \int_{P_{H_\sigma^\bot}} f_i(y)^{|\sigma|} \right)^\frac{|\sigma|-|\sigma_i|}{|\sigma|(m-s)}.
    \end{split}
    \end{equation}
    Let us notice that due to \eqref{eq:BrunnMinkowski} $f_i$ is a concave function for every $i\in[m]$. Hence, by Berwald inequality \eqref{eq:Berwald} we get that
    \begin{equation}\label{(3)}
    \left( \frac{{n\choose n-|\sigma|}}{\vol{P_{H_\sigma^\bot}K}} \int_{P_{H_\sigma^\bot}}f_i(y)^{|\sigma|}dy \right)^\frac{1}{|\sigma|} \leq
    \left( \frac{{n-|\sigma_i|\choose n-|\sigma|}}{\vol{P_{H_\sigma^\bot}K}} \int_{P_{H_\sigma^\bot}}f_i(y)^{|\sigma|-|\sigma_i|}dy \right)^\frac{1}{|\sigma|-|\sigma_i|}.
    \end{equation}
    Mixing \eqref{(1)}, \eqref{(2)}, and \eqref{(3)} we conclude that
    \[
    \begin{split}
    \mathrm{vol}(K)^{m-s} & \leq \frac{1}{{n\choose n-|\sigma|}^{m-s}\vol{P_{H_\sigma^\bot}K}^s} \prod_{i=1}^m{n-|\sigma_i|\choose n-|\sigma|}\int_{P_{H_\sigma^\bot}K}f_i(y)^{|\sigma|-|\sigma_i|}dy \\
    & = \frac{1}{{n\choose n-|\sigma|}^{m-s}\vol{P_{H_\sigma^\bot}K}^s} \prod_{i=1}^m{n-|\sigma_i|\choose n-|\sigma|}\vol{P_{H_{\sigma_i}^\bot}K}.
    \end{split}
    \]
\end{proof}
\end{document}